\documentclass[11pt, oneside]{amsart}   	
\usepackage{geometry}                		
\usepackage{graphicx}				
\usepackage{tikz}
\usepackage{amssymb}
\usepackage{hyperref}

\newtheorem{theorem}{Theorem}[section]
\newtheorem{lemma}[theorem]{Lemma}
\newtheorem{cor}[theorem]{Corollary}

\theoremstyle{definition}
\newtheorem{definition}[theorem]{Definition}
\newtheorem{example}[theorem]{Example}

\theoremstyle{remark}

\numberwithin{equation}{section}

\title{Sharp Bounds for Kulli-Basava Indices of Graphs}
\author{Dr. Sanju Vaidya and Dr. Jeff Chang}

\begin{document}

\begin{abstract}

In this paper, we establish formulas and sharp bounds for general Kulli-Basava indices and characterize graphs that attain these bounds. These indices have been shown to possess strong discriminating power for distinguishing nonisomorphic chemical structures. They are based on the edge neighborhood degrees of vertices in a graph. We also establish bounds for several classes of graphs, including triangle- and quadrangle-free graphs and graphs with a prescribed clique number. The formulas and bounds depend on the numbers of vertices and edges, the minimum and maximum edge neighborhood degrees, and the first Zagreb index.

\end{abstract}
\maketitle

\section{Introduction}

Over the last forty years, graph theory has provided powerful tools for developing models such as quantitative structure--property relationship (QSPR) and quantitative structure--activity relationship (QSAR) models, which are used to analyze the structures and properties of chemical compounds \cite{devillers1999topological, basak1996estimation, todeschini2008handbook}. In particular, QSPR/QSAR analyses of certain antiviral drugs have been valuable in the treatment of COVID-19 patients \cite{kirmani2021topological, mondal2022topological}. In molecular graphs of chemical compounds, vertices correspond to atoms, and edges correspond to the bonds between them. A topological index, or connectivity index, is a molecular descriptor based on the molecular graph of a chemical compound. The pioneering work of Harry Wiener \cite{wiener1947structural} inspired many mathematicians and scientists to define additional topological indices and study their applications in fields such as medicine and environmental science.

In 1972, Gutman and Trinajsti\'{c} introduced the Zagreb indices \cite{gutman1972graph}. These indices are based on vertex degrees and are useful for modeling the chemical and biological properties of compounds, as shown by Devillers et al.\ \cite{devillers1999topological}, Basak et al.\ \cite{basak1996estimation}, and Todeschini et al.\ \cite{todeschini2008handbook}. Gutman et al.\ \cite{gutman2007alkanes} and Zhou et al.\ \cite{zhou2008estrada} used Zagreb indices to study bounds for the Estrada index, which is defined in terms of the eigenvalues of the adjacency matrix.

The discriminating power of a topological index is its ability to distinguish between nonisomorphic chemical structures. Two or more isomers of a chemical compound may have the same topological index. Bonchev et al.\ \cite{bonchev1981isomer} defined the mean isomer degeneracy $d$ as the ratio of the number of isomers to the number of distinct values of the topological index. A value of $d=1$ indicates maximal discriminating power. In 2019, Basavanagoud and Jakkannavar \cite{basavanagoud2019kulli} introduced the Kulli-Basava indices and proved that their mean isomer degeneracy for octane isomers is 1. They also developed regression models for several properties of octane isomers using these indices. Later in 2019, Kulli \cite{VK2019} generalized these indices. They are based on the edge neighborhood degree of a vertex, defined as the sum of the degrees of all edges incident to that vertex. Recall that the degree of an edge $e=uv$ in a graph $G$ is $d(u)+d(v)-2$. For further developments, see Kulli \cite{VK2022}, Afridi et al.\ \cite{afridi2023sharp}, Basavanagoud et al.\ \cite{basavanagoud2024new}, and Asim et al.\ \cite{asim2025topological}.

The main research problem is to derive formulas and bounds for the indices described above and to characterize the graphs that attain those bounds. We establish formulas for the general Kulli-Basava index, derive upper and lower bounds, and describe the corresponding extremal graphs. We also establish bounds for several graph classes, including triangle- and quadrangle-free graphs and graphs with a prescribed clique number. These formulas and bounds involve the numbers of vertices and edges, the minimum and maximum edge neighborhood degrees, and the first Zagreb index, which is the sum of the squares of the vertex degrees. Hu, Li, Shi, Xu, and Gutman \cite{hu2005molecular} investigated the zeroth-order general Randi\'{c} index (also called the general Zagreb index) for molecular graphs with maximum vertex degree at most 4. We extend their approach to the general Kulli-Basava index for graphs whose edge neighborhood degrees are at least 3.

Section 2 reviews the definitions of the first Zagreb index and the Kulli-Basava indices. Section 3 establishes formulas for the general Kulli-Basava index, derives upper and lower bounds, characterizes the graphs that attain them, and presents sharp bounds for several special graph classes. Section 4 contains the conclusion. Chemical graph theory offers many open problems because more than 100 topological indices have been introduced. These indices play a crucial role in modeling the properties of chemical compounds, thereby reducing the cost and time required for applications such as drug development.

\section{Review of Kulli-Basava and Zagreb Indices}

We use the notation and terminology introduced by Gutman et al.\ \cite{gutman2007alkanes}, Basavanagoud and Jakkannavar \cite{basavanagoud2019kulli}, and Kulli \cite{VK2019, VK2022, kulli2019multiplicative}.

Throughout the paper, let $G$ be a simple connected graph with vertex set $V(G)$ and edge set $E(G)$, and let $d(u)$ denote the degree of a vertex $u\in V(G)$. The first Zagreb index, introduced by Gutman and Trinajsti\'{c} \cite{gutman1972graph}, is defined as follows.

\begin{definition} The first Zagreb index is defined as $M_1(G)=\sum_{u\in V(G)}d(u)^2.$
\end{definition}

The Zagreb indices have been generalized and studied extensively. In 2019, Basavanagoud and Jakkannavar \cite{basavanagoud2019kulli} introduced the modified first Kulli-Basava index. It is based on the edge neighborhood degree of a vertex, defined as follows.

\begin{definition} Let $G$ be a graph with vertex set $V(G)$. For a vertex $v\in V(G)$, let $S_e(v)$ denote its edge neighborhood degree, defined as the sum of the degrees of all edges incident to $v$, where the degree of an edge $e=uv$ in $G$ is $d_G(e)=d(u)+d(v)-2$.
\end{definition}

\begin{definition} The modified first Kulli-Basava index is defined by
    \[KB_1^*(G)=\sum_{v\in V(G)} S_e(v)^2.\]
\end{definition}

In 2019, Kulli \cite{VK2019} defined the following general Kulli-Basava index of a graph $G$.

\begin{definition} Let $a$ be a nonzero real number such that $a\neq 1$. The general Kulli-Basava index is defined by
    \[KB^a(G)=\sum_{v\in V(G)} S_e(v)^a.\]

\end{definition}

In 2019, Kulli \cite{kulli2019multiplicative} defined the following general multiplicative Kulli-Basava index of a graph $G$.

\begin{definition} Let $a$ be a nonzero real number such that $a\neq 1$. The general multiplicative Kulli-Basava index is defined by
\[KB^a\Pi(G)=\prod_{v\in V(G)} S_e(v)^a.\]

\end{definition}
We will use Jensen's inequality in the following form.

\begin{theorem}[Jensen's inequality]\label{Jensen}
For any positive integer $k$, if $f$ is strictly convex (that is, $f''(x)>0$), then
\[
f\left(\frac{1}{k}\sum_{i=1}^{k}x_i\right)\leq \frac{1}{k}\sum_{i=1}^{k}f(x_i),
\]
with equality if and only if $x_1=x_2=\cdots=x_k$. If $-f$ is strictly convex, then the inequality is reversed.
\end{theorem}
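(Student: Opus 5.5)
We prove the statement by the standard convexity route. Set $m=\frac{1}{k}\sum_{i=1}^k x_i$. Since $f''>0$ on the interval $I$ containing the points, the tangent line at $m$ lies below the graph. By Taylor's theorem with Lagrange remainder, for each $i$ there is some $\xi_i$ strictly between $m$ and $x_i$ (or $\xi_i=m$ if $x_i=m$) such that
\[
f(x_i)=f(m)+f'(m)(x_i-m)+\tfrac12 f''(\xi_i)(x_i-m)^2 .
\]
Because $f''(\xi_i)>0$, this gives $f(x_i)\ge f(m)+f'(m)(x_i-m)$, with equality exactly when $x_i=m$.

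Next we average these $k$ inequalities. The linear terms cancel because $\sum_{i=1}^k (x_i-m)=0$. What remains is
\[
\frac1k\sum_{i=1}^k f(x_i)\ \ge\ f(m),
\]
which is the claimed inequality. The quadratic remainders are nonnegative, and each is strictly positive whenever $x_i\neq m$. So equality holds if and only if every $x_i$ equals $m$, that is, $x_1=\cdots=x_k$. The converse direction is immediate: if all the $x_i$ are equal, both sides equal $f(x_1)$.

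For the reversed statement, apply the result to $g=-f$, which is strictly convex by hypothesis. Multiplying the resulting inequality by $-1$ reverses it, and the equality characterization carries over unchanged.

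The only delicate point is the hypothesis. The statement implicitly needs $f$ to be twice differentiable on an interval $I$ containing all the $x_i$, and hence also $m$, since $I$ is convex. The Taylor expansion and the strictness argument both rely on this. In the paper's later applications we have $f(x)=x^a$ on $(0,\infty)$ with $S_e(v)\ge 1$ or larger, so we state the proof for such an interval. No further obstacle is expected.
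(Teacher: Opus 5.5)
Your proof is correct. The Taylor expansion about the mean $m$, with $f''(\xi_i)>0$, gives $f(x_i)\ge f(m)+f'(m)(x_i-m)$, with strict inequality whenever $x_i\ne m$. Averaging cancels the linear terms, and the equality case follows exactly as you describe. The reversed statement follows by applying the result to $-f$.

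The paper does not prove this statement. It quotes Jensen's inequality as a standard tool and uses it only in the theorem giving $KB^a(G)\ge n^{1-a}(2M_1-4m)^a$ (reversed for $0<a<1$), so there is no competing argument to compare with.

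Your route relies on the paper's definition of strict convexity as $f''>0$. You are right to make explicit the hypothesis the statement leaves unstated: $f$ must be twice differentiable on an interval containing all the $x_i$. That is exactly what $f(x)=x^a$ on $(0,\infty)$ satisfies. A proof from the chord definition of strict convexity (via monotone difference quotients, or induction on $k$) would be more general. It would cover functions such as $x^4$, whose second derivative vanishes at a point, but the paper does not need that generality.

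Your side remark that the edge neighborhood degrees are positive is also justified. In a connected graph with $n\ge 3$, no edge joins two vertices of degree $1$, so every edge has degree at least $1$ and $S_e(v)\ge d(v)\ge 1$.
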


We also use the following result of Zhou \cite{zhou2006note} for triangle- and quadrangle-free graphs.

\begin{theorem}
Let $G$ be a triangle- and quadrangle-free graph with $n$ vertices. Then its first Zagreb index satisfies $M_1\leq n(n-1)$.
\end{theorem}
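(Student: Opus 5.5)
The statement in question is Zhou's bound $M_1\le n(n-1)$ for triangle- and quadrangle-free graphs. The plan is to double count paths of length two and use the absence of short cycles to control how these paths can overlap.

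\textbf{Step 1: count paths through each vertex.} Fix a vertex $u$. Its neighbors $N(u)$ form an independent set, since an edge between two neighbors would close a triangle. Moreover, two distinct neighbors $v,w\in N(u)$ have no common neighbor other than $u$, since a second common neighbor $x$ would give the quadrangle $u v x w$. So each $v\in N(u)$ contributes the $d(v)-1$ vertices of $N(v)\setminus\{u\}$. These vertices lie at distance exactly $2$ from $u$: they are not $u$, and they are not in $N(u)$ because $N(u)$ is independent.

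\textbf{Step 2: bound the degree sum of $N(u)$.} By Step 1 the sets $N(v)\setminus\{u\}$ for $v\in N(u)$ are pairwise disjoint. They are also disjoint from $\{u\}\cup N(u)$. Counting vertices of $G$ therefore gives
\[
1+d(u)+\sum_{v\in N(u)}\bigl(d(v)-1\bigr)\le n,
\qquad\text{that is,}\qquad
\sum_{v\in N(u)} d(v)\le n-1.
\]

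\textbf{Step 3: sum over all vertices.} We use the standard identity
\[
\sum_{u\in V(G)}\sum_{v\in N(u)} d(v)=\sum_{v\in V(G)} d(v)\cdot d(v)=M_1(G),
\]
which holds because each vertex $v$ appears exactly $d(v)$ times in the double sum, once for each of its neighbors. Summing the inequality of Step 2 over all $n$ vertices then yields $M_1(G)\le n(n-1)$.

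\textbf{Main obstacle.} The only delicate point is Step 1, namely checking that the neighborhoods $N(v)\setminus\{u\}$ are disjoint from each other, from $N(u)$, and from $u$. Each part uses exactly one hypothesis: disjointness from each other uses quadrangle-freeness, and disjointness from $N(u)$ uses triangle-freeness. Isolated vertices, if they were allowed, contribute $0$ and do not affect the argument.

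Equality holds exactly when every vertex reaches all other vertices within distance $2$, that is, when $G$ has diameter at most $2$. Examples are the star $K_{1,n-1}$ and Moore graphs of girth $5$ such as the Petersen graph.
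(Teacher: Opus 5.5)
Your proof is correct and complete. The paper itself gives no proof of this statement: it quotes the bound from Zhou and uses only the inequality.

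Your argument is the standard one. Triangle- and quadrangle-freeness make $\{u\}$, $N(u)$, and the sets $N(v)\setminus\{u\}$ for $v\in N(u)$ pairwise disjoint. Counting vertices then gives the local bound $\sum_{v\in N(u)} d(v)\le n-1$, and summing it through the identity $M_1=\sum_{u}\sum_{v\in N(u)} d(v)$ yields $M_1\le n(n-1)$.

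Your equality analysis is also right and goes beyond what the paper states. Equality holds if and only if every vertex is within distance $2$ of every other vertex, so examples include $K_{1,n-1}$ and the Moore graphs of diameter $2$. This matches the paper's later corollary for triangle- and quadrangle-free graphs, which relies on the star being extremal: for $K_{1,n-1}$ one has $M_1=(n-1)^2+(n-1)=n(n-1)$.
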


We also use the following lower bound of Filipovski \cite{filipovski2021new} for the first Zagreb index of a graph with a prescribed clique number. Recall that the clique number of a graph $G$ is the order of a largest complete subgraph of $G$.

\begin{theorem} Let $G$ be a graph with $n$ vertices, $m$ edges, and clique number $w$. Then
\[
M_1\geq 4mn+\frac{n^3}{w^2}-n^3,
\]
with equality if and only if $G$ is a regular graph of degree $k$ and $w=\frac{n}{n-k}$.
\end{theorem}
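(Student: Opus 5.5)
Since this bound is quoted from Filipovski \cite{filipovski2021new}, what follows is a reconstruction of how I would prove it, not a record of the source proof. The plan is to rewrite the inequality as a statement about the co-degrees $n-d(u)$. Then a single application of the Cauchy--Schwarz inequality, combined with Tur\'an's theorem, gives the bound. Using $\sum_{u} d(u)=2m$,
\[
\sum_{u\in V(G)}\bigl(n-d(u)\bigr)^2 = n^3-2n\sum_{u}d(u)+\sum_u d(u)^2 = n^3-4mn+M_1 .
\]
So the claimed inequality $M_1\geq 4mn+\frac{n^3}{w^2}-n^3$ is equivalent to
\[
\sum_{u\in V(G)}\bigl(n-d(u)\bigr)^2\geq \frac{n^3}{w^2}.
\]

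Next I apply Cauchy--Schwarz, or equivalently Theorem~\ref{Jensen} with $f(x)=x^2$, to the $n$ numbers $n-d(u)$:
\[
\sum_u \bigl(n-d(u)\bigr)^2\geq \frac{1}{n}\Bigl(\sum_u (n-d(u))\Bigr)^2=\frac{(n^2-2m)^2}{n}.
\]
It then suffices to show $n^2-2m\geq n^2/w$, that is, $m\leq \bigl(1-\frac1w\bigr)\frac{n^2}{2}$. A graph with clique number $w$ contains no $K_{w+1}$, so this is exactly Tur\'an's theorem in its real-valued form. That form holds without any divisibility assumption on $n$; for instance, it follows from the Motzkin--Straus theorem applied to the uniform weight vector. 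Note also that $n^2-2m\geq n^2-n(n-1)>0$, so squaring the inequality $n^2-2m\geq n^2/w$ preserves its direction.

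For the equality case, equality in the Cauchy--Schwarz step forces all the $n-d(u)$ to be equal, so $G$ is $k$-regular for some $k$. Equality in the Tur\'an step then reads $n^2-nk=n^2/w$, i.e.\ $w=\frac{n}{n-k}$. Conversely, suppose $G$ is $k$-regular with $w=\frac{n}{n-k}$. Then $\sum_u (n-k)^2=n(n-k)^2=n^3/w^2$, and the equality follows directly from the identity above.

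The only nontrivial ingredient is the Tur\'an bound in the form $2m\leq (1-1/w)n^2$. I would cite it rather than reprove it, since it is classical. If a self-contained argument were wanted, I would use the Motzkin--Straus inequality $\sum_{uv\in E}x_ux_v\leq \frac12(1-\frac1w)$ for nonnegative $x$ with $\sum x_u=1$, taking $x_u=1/n$.
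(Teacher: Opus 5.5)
Your proof is correct. The paper gives no argument of its own to compare it with: it quotes this bound from Filipovski's paper without proof and uses it only as a black box in the clique-number corollary.

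Your derivation is complete once Tur\'an's theorem is granted:
\begin{itemize}
\item The co-degree identity $\sum_u (n-d(u))^2=n^3-4mn+M_1$ is right.
\item The Cauchy--Schwarz step gives $(n^2-2m)^2/n$.
\item The real-valued Tur\'an bound $2m\le\bigl(1-\frac1w\bigr)n^2$, via Motzkin--Straus or directly, closes the chain.
\item The equality analysis is sound. Equality at the end of a two-step chain forces equality in both steps, which gives regularity and then $w=\frac{n}{n-k}$, and the converse follows by direct substitution into the identity.
\end{itemize}

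Your route also sharpens the quoted equality condition. For a $k$-regular graph, $w=\frac{n}{n-k}$ is exactly equality in Tur\'an's bound $2m\le(1-\frac1w)n^2$. That bound is attained only when $w\mid n$ and $G$ is the Tur\'an graph. So the extremal graphs are precisely the complete $w$-partite graphs with all parts of size $n/w$, for example $K_n$, $C_4=K_{2,2}$, and the empty graph. This also identifies the equality graphs in the paper's clique-number corollary.
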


\section{Formulas and Bounds for Kulli-Basava Indices}

In this section, we establish formulas for the general Kulli-Basava index and use them to derive sharp upper and lower bounds. We first recall the following lemma, proved by Vaidya and Chang \cite{vaidya2024sharp}.

\begin{lemma}\label{Lemma}
Let $p$ and $q$ be positive integers such that $p<q$, and let $\alpha\in\mathbb{R}\setminus\{0,1\}$. Then the following statements hold.
\begin{enumerate}
\item If $\alpha<0$ or $\alpha>1$, then $(p+i)^\alpha-p^\alpha-i\left(\frac{q^\alpha-p^\alpha}{q-p}\right)\leq 0$ for $1\leq i\leq q-p-1$. If $0<\alpha<1$, then the inequality is reversed.

\item If $\alpha<0$ or $\alpha>1$, then $(p+i)^\alpha-p^\alpha-i((p+1)^\alpha-p^\alpha)\geq 0$ for $2\leq i\leq q-p$. If $0<\alpha<1$, then the inequality is reversed.

\end{enumerate}
\end{lemma}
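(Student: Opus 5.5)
The plan is to reduce both parts to convexity or concavity of $f(x)=x^\alpha$ on $(0,\infty)$. Here $f''(x)=\alpha(\alpha-1)x^{\alpha-2}$, which is positive when $\alpha<0$ or $\alpha>1$ and negative when $0<\alpha<1$. Since $\alpha\neq 0,1$, $f$ is strictly convex in the first case and strictly concave in the second. I will write the argument for the convex case; the concave case follows by reversing every inequality, exactly as in the reversed form of Theorem~\ref{Jensen}.

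For part (1), the expression $p^\alpha+i\frac{q^\alpha-p^\alpha}{q-p}$ is the value at $x=p+i$ of the chord (secant line) of $f$ joining $(p,p^\alpha)$ and $(q,q^\alpha)$. For $1\le i\le q-p-1$, the point $p+i$ lies strictly between $p$ and $q$, so we can write $p+i=(1-t)p+tq$ with $t=\frac{i}{q-p}\in(0,1)$. The two-point convexity inequality $f((1-t)p+tq)\le(1-t)f(p)+tf(q)$ then gives
\[
(p+i)^\alpha\le p^\alpha+\tfrac{i}{q-p}\,(q^\alpha-p^\alpha),
\]
which is the claim. Theorem~\ref{Jensen} as stated uses equal weights, so the weighted two-point inequality must be justified separately. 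I would either cite the standard definition of convexity directly, or derive it from the mean value theorem: the secant slopes satisfy $\frac{f(p+i)-f(p)}{i}\le\frac{f(q)-f(p+i)}{q-p-i}$ because $f'$ is increasing. Rearranging that slope inequality gives the same bound.

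For part (2), the claim is $f(p+i)-f(p)\ge i\,(f(p+1)-f(p))$ for $2\le i\le q-p$. I would telescope:
\[
f(p+i)-f(p)=\sum_{j=0}^{i-1}\bigl(f(p+j+1)-f(p+j)\bigr).
\]
By the mean value theorem, each increment equals $f'(\xi_j)$ for some $\xi_j\in(p+j,p+j+1)$. Since $f'$ is increasing in the convex case, each increment is at least $f'(\xi_0)=f(p+1)-f(p)$. Summing the $i$ terms gives the inequality. In the concave case $f'$ is decreasing, so every increment is at most the first one and the inequality reverses.

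There is no real obstacle here. The only care needed is, first, to avoid relying on the equal-weight form of Theorem~\ref{Jensen}, and second, to note that positivity of $p$ keeps the whole interval $[p,q]$ inside the domain $(0,\infty)$ where the sign of $f''$ is valid. Positivity matters especially for negative $\alpha$, where $f$ is undefined at $0$.
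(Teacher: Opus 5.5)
Your proof is correct. The chord inequality in part (1), its mean-value-theorem justification with the rearranged slope comparison, and the telescoping argument in part (2) all go through. You are also right that $p\geq 1$ keeps everything inside $(0,\infty)$.

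The paper does not prove this lemma itself; it quotes it from Vaidya and Chang \cite{vaidya2024sharp}. Its one-line proof of the companion Lemma~\ref{Lemma2}, however, points to the intended route. The secant slope from the fixed base point, $g(x)=\frac{x^\alpha-p^\alpha}{x-p}$, is monotone on $(p,\infty)$: increasing when $\alpha<0$ or $\alpha>1$, and decreasing when $0<\alpha<1$. Both parts then follow from this single fact after multiplying by $i$. Part (1) is $g(p+i)\leq g(q)$, using $p+i<q$, and part (2) is $g(p+i)\geq g(p+1)$, using $i\geq 2$.

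You use the same convexity but split it into two mechanisms: a chord bound for (1) and an increment comparison for (2). Your telescoping step says that the average of the $i$ unit increments is at least the first one, which is exactly $g(p+i)\geq g(p+1)$.

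The secant-slope formulation is more economical. It also transfers verbatim to $\ln x$, which is concave like the case $0<\alpha<1$, and so gives Lemma~\ref{Lemma2}. Your version is more self-contained, since it derives each inequality from the sign of $f''$ rather than asserting that $g$ is monotone; that monotonicity would itself be proved by the same three-chord or mean-value argument you give. It also makes clear that the bound $i\leq q-p-1$ in part (1) is essential, because it places $p+i$ strictly between $p$ and $q$. The bound $i\leq q-p$ in part (2) plays no role.
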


We also need the following lemma.

\begin{lemma}\label{Lemma2}
Let $p$ and $q$ be positive integers such that $p<q$. Then the following statements hold.
\begin{enumerate}
\item $\ln(p+i)-\ln(p)-i\left(\frac{\ln(q)-\ln(p)}{q-p}\right)\geq 0$ for $1\leq i\leq q-p-1$.

\item $\ln(p+i)-\ln(p)-i(\ln(p+1)-\ln(p))\leq 0$ for $2\leq i\leq q-p$.

\end{enumerate}
\end{lemma}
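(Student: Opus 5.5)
Both parts follow from the strict concavity of $f(x)=\ln x$ on $(0,\infty)$, since $f''(x)=-1/x^2<0$. In each part we compare $\ln$ at an intermediate point $p+i$ with a straight line through two points of the graph of $\ln$. Concavity puts the graph above its chords on the chord's interval and below the extension of a chord beyond its endpoints. The argument runs parallel to the case $0<\alpha<1$ of Lemma~\ref{Lemma}, with the same inequality directions. One could also argue via the limit $\frac{x^\alpha-1}{\alpha}\to\ln x$ as $\alpha\to0$, but the direct concavity argument is cleaner.

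For part (1), fix $1\le i\le q-p-1$ and write $p+i=(1-t)p+tq$ with $t=\frac{i}{q-p}\in(0,1)$. Concavity of $\ln$ gives
\[
\ln(p+i)\ge (1-t)\ln p+t\ln q=\ln p+i\,\frac{\ln q-\ln p}{q-p},
\]
which is exactly statement (1). Equivalently, we can apply Theorem~\ref{Jensen} to $-\ln$ in weighted form, or use the fact that slopes of secants of a concave function decrease. The slope from $p$ to $p+i$ is at least the slope from $p$ to $q$.

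For part (2), fix $2\le i\le q-p$. We use the monotonicity of secant slopes of a concave function with common left endpoint $p$. Since $p<p+1<p+i$, the slope from $p$ to $p+i$ is at most the slope from $p$ to $p+1$:
\[
\frac{\ln(p+i)-\ln p}{i}\le \ln(p+1)-\ln p.
\]
Multiplying by $i>0$ gives (2). As a more elementary alternative, we can write $\ln(p+i)-\ln p=\sum_{j=0}^{i-1}\ln\frac{p+j+1}{p+j}$ and observe that each term is at most $\ln\frac{p+1}{p}$, because $\frac{p+j+1}{p+j}=1+\frac1{p+j}$ decreases in $j$. This version also shows the inequality is strict for $i\ge2$.

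There is no real obstacle. The only care needed is to state the secant-slope monotonicity for concave functions correctly, namely that $\frac{f(y)-f(x)}{y-x}$ is nonincreasing in each variable. From it we must draw the right direction in each part: the intermediate point lies above the chord in (1), and the far point lies below the extended short chord in (2).
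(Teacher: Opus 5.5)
Your proposal is correct and takes essentially the paper's approach. The paper's one-line proof uses that $x\mapsto\frac{\ln x-\ln p}{x-p}$ is decreasing for $x>p$, which is exactly the concavity-based secant-slope monotonicity you invoke: compare $x=p+i$ with $x=q$ for part (1) and with $x=p+1$ for part (2). Your chord formulation of part (1) and the telescoping alternative for part (2) are equivalent restatements of the same fact.
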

\begin{proof}
The results follow from the fact that the function $f(x)=\frac{\ln(x)-\ln(p)}{x-p}$ is decreasing for $x>p$.
\end{proof}

\begin{theorem}\label{Theorem1}
Let $G$ be a graph with $n\geq 3$ vertices and $m$ edges. Let $n_i$ denote the number of vertices with edge neighborhood degree $i$, and let $\delta$ and $\Delta$ denote the minimum and maximum edge neighborhood degrees, respectively, where $\delta\neq\Delta$. Let $a\in\mathbb{R}\setminus\{0,1\}$ and set $s_a=\frac{\Delta^a-\delta^a}{\Delta-\delta}$. Then the following statements hold.
    \begin{enumerate}
\item 
        \[KB^a(G) = n\delta^a +  (2M_1 - 4m - n\delta)s_a +\sum_{i =1}^{\Delta - \delta - 1}n_{\delta + i} \left[(\delta + i)^a - \delta^a  - is_a\right].\]

If $a<0$ or $a>1$, the coefficients $(\delta+i)^a-\delta^a-is_a$ are nonpositive for $1\leq i\leq\Delta-\delta-1$; if $0<a<1$, they are nonnegative.

\item If $a>1$, then the natural logarithm of the general multiplicative Kulli-Basava index is
\[
\begin{aligned}
\ln\left[\prod_{v\in V(G)}S_e(v)^a\right]
&=a\left[n\ln(\delta)+\frac{(2M_1-4m-n\delta)(\ln\Delta-\ln\delta)}{\Delta-\delta}\right.\\
&\qquad\left.+\sum_{i=1}^{\Delta-\delta-1}n_{\delta+i}\left(\ln(\delta+i)-\ln(\delta)-i\frac{\ln\Delta-\ln\delta}{\Delta-\delta}\right)\right].
\end{aligned}
\]
Moreover, the coefficients of $n_{\delta+i}$ are nonnegative for $1\leq i\leq\Delta-\delta-1$.
\end{enumerate} 
\end{theorem}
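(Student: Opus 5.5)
Group vertices by edge neighborhood degree, so that $KB^a(G)=\sum_{i=0}^{\Delta-\delta} n_{\delta+i}(\delta+i)^a$, with $\sum_i n_{\delta+i}=n$. Two linear identities drive everything.

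First, compute $\sum_{v} S_e(v)$. Each edge $e=uv$ has degree $d(u)+d(v)-2$ and is counted once at $u$ and once at $v$, so
\[
\sum_v S_e(v)=2\sum_{uv\in E}(d(u)+d(v)-2)=2M_1-4m,
\]
using the standard identity $\sum_{uv\in E}(d(u)+d(v))=\sum_u d(u)^2=M_1$. Hence $\sum_{i=0}^{\Delta-\delta}(\delta+i)n_{\delta+i}=2M_1-4m$, and subtracting $\delta n$ gives
\[
\sum_{i=1}^{\Delta-\delta} i\,n_{\delta+i}=2M_1-4m-n\delta .
\]

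Next, write $(\delta+i)^a=\delta^a+is_a+\bigl[(\delta+i)^a-\delta^a-is_a\bigr]$ for each $i$ and sum against $n_{\delta+i}$. The $\delta^a$ terms contribute $n\delta^a$. The $is_a$ terms contribute $(2M_1-4m-n\delta)s_a$ by the identity above. The bracket vanishes at $i=0$ and, by the definition of $s_a$, also at $i=\Delta-\delta$. So only $1\le i\le \Delta-\delta-1$ survive, which is exactly the formula in part (1). The sign claims on the coefficients are Lemma~\ref{Lemma}(1) with $p=\delta$, $q=\Delta$, $\alpha=a$. The only care needed is that $\delta\ge1$ so that $p$ is a positive integer. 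If $\delta=0$ the graph has $S_e\equiv0$ somewhere, which for connected $G$ with $n\ge3$ cannot happen: every vertex lies on an edge of degree $\ge1$.

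For part (2), $\ln\prod_v S_e(v)^a=a\sum_v\ln S_e(v)=a\sum_i n_{\delta+i}\ln(\delta+i)$. The same decomposition applies with $\ln$ in place of $x^a$:
\[
\ln(\delta+i)=\ln\delta+i\,\frac{\ln\Delta-\ln\delta}{\Delta-\delta}+\Bigl[\ln(\delta+i)-\ln\delta-i\,\frac{\ln\Delta-\ln\delta}{\Delta-\delta}\Bigr].
\]
Summing with the same two identities gives the displayed expression. Nonnegativity of the coefficients follows from Lemma~\ref{Lemma2}(1). The hypothesis $a>1$ plays no role beyond $a\neq0$; the formula holds for any real $a$.

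The main obstacle is just verifying the degree-sum identity $\sum_v S_e(v)=2M_1-4m$ correctly, including the factor $2$ from double counting. The rest is the telescoping bookkeeping at the endpoints $i=0$ and $i=\Delta-\delta$.
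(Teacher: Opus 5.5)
Your proof is correct and is essentially the paper's argument. Writing $(\delta+i)^a$ as the chord value $\delta^a+is_a$ plus a remainder is equivalent to the paper's step of eliminating $n_\delta$ and $n_\Delta$ through the two linear constraints $\sum_i n_i=n$ and $\sum_i i\,n_i=2M_1-4m$; both then get the signs from Lemma~\ref{Lemma}(1) and Lemma~\ref{Lemma2}(1). The only differences are that you derive $\sum_v S_e(v)=2M_1-4m$ by double counting instead of citing Basavanagoud--Jakkannavar, and you explicitly check $\delta\ge 1$ (needed for the lemmas, for $a<0$, and for the logarithm), which the paper leaves implicit.
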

\begin{proof}

By part (iii) of Lemma 2.2 in \cite{basavanagoud2019kulli}, $\sum_{i=\delta}^\Delta i n_i=2M_1-4m$. We also have $n=\sum_{i=\delta}^\Delta n_i$. Solving these equations for $n_\Delta$ gives
\[
n_\Delta=\frac{2M_1 - 4m - n\delta - \sum_{i=\delta + 1}^{\Delta  - 1}(i - \delta)n_i}{\Delta - \delta}.
\]

Writing $KB^a(G)$ in terms of $n_i$ for $\delta\leq i\leq\Delta$ and substituting $n_\delta=n-\sum_{i=\delta+1}^\Delta n_i$, we obtain
\[
KB^a(G) = \sum_{i=\delta}^\Delta  i^a n_i = \delta^a\left( n - \sum_{i= \delta + 1}^\Delta n_i\right) + \sum_{i=\delta + 1}^\Delta  i^a n_i = n\delta^a + \sum_{i = \delta + 1}^\Delta  n_i(i^a - \delta^a).
\]
Substituting the expression for $n_\Delta$ into this sum and applying Lemma \ref{Lemma} yields part (1).

For part (2), write the natural logarithm of the general multiplicative Kulli-Basava index in terms of $n_i$, $\delta\leq i\leq\Delta$, and substitute $n_\delta=n-\sum_{i=\delta+1}^\Delta n_i$. This gives
\[
\ln\left[\prod_{v\in V(G)}S_e(v)^a\right]=a\sum_{i=\delta}^\Delta \ln(i)n_i=a\left[n\ln(\delta)+\sum_{i=\delta+1}^\Delta n_i(\ln(i)-\ln(\delta))\right].
\]
Substituting $n_\Delta$ into the sum and applying part (1) of Lemma \ref{Lemma2} completes the proof.

\end{proof}

The following corollary gives upper and lower bounds for the general Kulli-Basava index, depending on the value of $a$. All the bounds are sharp.

\begin{cor}\label{Cor1}
Let $G$ be a graph with $n\geq 3$ vertices and $m$ edges. Let $\delta$ and $\Delta$ denote the minimum and maximum edge neighborhood degrees, respectively, where $\delta\neq\Delta$. Let $a\in\mathbb{R}\setminus\{0,1\}$ and set $s_a=\frac{\Delta^a-\delta^a}{\Delta-\delta}$. Then the following statements hold.

\begin{enumerate}
        \item If $a < 0$ or $a > 1$, the upper bound for the general Kulli-Basava index is $ KB^a(G) \leq n\delta^a +  (2M_1 - 4m - n\delta)s_a$.

    \item If $0<a<1$, the corresponding lower bound is $KB^a(G)\geq n\delta^a+(2M_1-4m-n\delta)s_a$.

    \item If $a>1$, then the lower bound for the natural logarithm of the general multiplicative Kulli-Basava index is 
$\ln\left[\prod_{v\in V(G)}S_e(v)^a\right]\geq a\left[n\ln(\delta)+\frac{(2M_1-4m-n\delta)(\ln\Delta-\ln\delta)}{\Delta-\delta}\right]$.

\end{enumerate}
In all three parts, equality holds for any graph whose vertices have edge neighborhood degree $\delta$ or $\Delta$.
\end{cor}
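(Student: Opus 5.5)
The plan is to read all three bounds directly off the exact identities in Theorem \ref{Theorem1}, and then check the equality claim. In each case the index is written as a "principal part" plus a correction term of the form $\sum_{i=1}^{\Delta-\delta-1} n_{\delta+i}\,c_i$. Here each $n_{\delta+i}\geq 0$ is a vertex count, and the sign of the coefficient $c_i$ is already given by Theorem \ref{Theorem1}, which relies on Lemma \ref{Lemma} and Lemma \ref{Lemma2}. So every bound follows by dropping the correction term and noting its sign.

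For part (1), take $a<0$ or $a>1$. The coefficient is $c_i=(\delta+i)^a-\delta^a-is_a$, and part (1) of Theorem \ref{Theorem1} says it is nonpositive. Lemma \ref{Lemma}(1) with $p=\delta$, $q=\Delta$ gives the same thing. Since each $n_{\delta+i}\geq 0$, the whole sum is $\leq 0$. Dropping it from the formula gives $KB^a(G)\leq n\delta^a+(2M_1-4m-n\delta)s_a$.

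Part (2) uses the same formula. For $0<a<1$ the coefficients are nonnegative, so the sum is $\geq 0$ and the inequality reverses.

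For part (3), take $a>1$ and use part (2) of Theorem \ref{Theorem1}. The bracketed correction has coefficients $\ln(\delta+i)-\ln\delta-i\frac{\ln\Delta-\ln\delta}{\Delta-\delta}\geq 0$ by Lemma \ref{Lemma2}(1). Multiplying by $a>0$ keeps the sum nonnegative, so dropping it gives the stated lower bound. The only subtlety is that $\ln\delta$ must be defined, which needs $\delta\geq 1$. This implicitly requires $G$ to have no vertex with $S_e(v)=0$. I would note this as an implicit hypothesis carried over from Theorem \ref{Theorem1}, whose statement already uses $\ln\delta$.

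For the equality statement, suppose every vertex has edge neighborhood degree $\delta$ or $\Delta$. Then $n_{\delta+i}=0$ for all $1\leq i\leq \Delta-\delta-1$, so the correction sums in Theorem \ref{Theorem1} vanish identically and each bound holds with equality. This is the direction the corollary asserts, and it is immediate.

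The point I expect to need the most care is sharpness, meaning that such graphs actually exist with $\delta\neq\Delta$. I would exhibit one to show the bounds are attained: the path $P_4$ has $S_e$ values $1,3,3,1$, so $\delta=1$ and $\Delta=3$, which makes the logarithm well defined. If a converse characterization were desired, it would require strict sign of the coefficients. That strictness holds by strict convexity or concavity of $x^a$ and $\ln x$ on $(\delta,\Delta)$, via the strict form of Jensen's inequality (Theorem \ref{Jensen}). Then equality forces every $n_{\delta+i}=0$. I would include this as a remark, since the corollary as stated only claims the "if" direction.
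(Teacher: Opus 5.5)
Your proposal is correct and matches the paper's argument: the bounds come from Theorem~\ref{Theorem1} by discarding the sign-definite correction sum $\sum_{i=1}^{\Delta-\delta-1} n_{\delta+i}c_i$, with equality when every $n_{\delta+i}$ vanishes (your $P_4$ example and strict-sign converse are valid additions). The condition $\delta\geq 1$ need not be treated as an extra hypothesis, because the paper's standing assumption that $G$ is connected with $n\geq 3$ forces every edge to have degree at least $1$, so $S_e(v)\geq 1$ for all $v$.
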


\begin{proof}
The results follow from Theorem \ref{Theorem1}.
\end{proof}

\begin{example}
For the graph in Figure \ref{fig:1}, $\delta=3$, $\Delta=18$, $m=n=12$, and $M_1=72$. Every vertex has edge neighborhood degree $3$ or $18$, so the graph attains the bounds in parts (1)--(3) of Corollary \ref{Cor1}.
\begin{figure}[h]
\centering
\begin{minipage}{.4\textwidth}
  \centering
  \begin{tikzpicture}[scale=0.3]
\draw[ultra thick] (-3,3) -- (3,3) -- (3,-3)--(-3,-3) -- (-3,3);
\draw[ultra thick] (3,3)--(6,4);
\draw[ultra thick] (3,3)--(4,6);
\draw[ultra thick] (-3,3)--(-4,6);
\draw[ultra thick] (-3,3)--(-6,4);
\draw[ultra thick] (3,-3)--(6,-4);
\draw[ultra thick] (3,-3)--(4,-6);
\draw[ultra thick] (-3,-3)--(-6,-4);
\draw[ultra thick] (-3,-3)--(-4,-6);
\filldraw[black] (3,3) circle (10pt);
\filldraw[black] (-3,3) circle (10pt);
\filldraw[black] (3,-3) circle (10pt);
\filldraw[black] (-3,-3) circle (10pt);
\filldraw[black] (6,4) circle (10pt);
\filldraw[black] (4,6) circle (10pt);
\filldraw[black] (-6,4) circle (10pt);
\filldraw[black] (-4,6) circle (10pt);
\filldraw[black] (6,-4) circle (10pt);
\filldraw[black] (4,-6) circle (10pt);
\filldraw[black] (-6,-4) circle (10pt);
\filldraw[black] (-4,-6) circle (10pt);
\end{tikzpicture}
\caption{A graph attaining the bounds in Corollary \ref{Cor1}.}
  \label{fig:1}
\end{minipage}%

\end{figure}
\end{example}

The following corollary gives a sharp upper bound for the Kulli-Basava index of triangle- and quadrangle-free graphs. The bound depends on the numbers of vertices and edges and on the minimum and maximum edge neighborhood degrees.

\begin{cor}
Let $G$ be a triangle- and quadrangle-free graph with $n\geq 3$ vertices and $m$ edges. Let $\delta$ and $\Delta$ denote the minimum and maximum edge neighborhood degrees, respectively, where $\delta\neq\Delta$. Let $a>1$ and set $s_a=\frac{\Delta^a-\delta^a}{\Delta-\delta}$. Then
\[
KB^a(G)\leq n\delta^a+(2n(n-1)-4m-n\delta)s_a,
\]
with equality for a star graph.
\end{cor}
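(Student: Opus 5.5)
The plan is to combine part (1) of Corollary \ref{Cor1} with Zhou's bound $M_1\leq n(n-1)$ for triangle- and quadrangle-free graphs. Since $a>1$, part (1) of Corollary \ref{Cor1} applies and gives
\[
KB^a(G)\leq n\delta^a+(2M_1-4m-n\delta)s_a .
\]
The right-hand side is an affine function of $M_1$ with coefficient $2s_a$. Next I check that this coefficient is positive. Because $a>1$ and $\delta,\Delta$ are positive, the function $x\mapsto x^a$ is strictly increasing on $(0,\infty)$. Hence $\Delta^a-\delta^a$ and $\Delta-\delta$ have the same sign, so $s_a>0$.

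The hypothesis $\delta\neq\Delta$ requires some edge, and in a graph with an edge neighborhood degree of $0$ we would need care. So I will note that $\delta\geq 1$ here: a connected graph with $n\geq 3$ vertices has no isolated edge, so every vertex lies on an edge of positive degree. Thus $\delta,\Delta>0$ and the monotonicity argument is legitimate.

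With $s_a>0$, I substitute Zhou's bound $M_1\leq n(n-1)$ into the affine expression. This yields
\[
n\delta^a+(2M_1-4m-n\delta)s_a\leq n\delta^a+(2n(n-1)-4m-n\delta)s_a,
\]
which is the claimed inequality.

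For sharpness I take the star $K_{1,n-1}$. It is a tree, so it is triangle- and quadrangle-free. Every edge has degree $(n-1)+1-2=n-2$. The centre therefore has $S_e=(n-1)(n-2)=\Delta$, and each leaf has $S_e=n-2=\delta$; these differ since $n\geq 3$. Because every vertex has edge neighborhood degree $\delta$ or $\Delta$, equality holds in Corollary \ref{Cor1}(1). It remains to check equality in Zhou's bound: $M_1(K_{1,n-1})=(n-1)^2+(n-1)=n(n-1)$. Both inequalities are therefore equalities, and the star attains the bound.

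The main obstacle is the degenerate small case. For $n=3$ the star gives $\delta=1$ and $\Delta=2$, which is fine. For $n\geq 4$ the only subtlety is the positivity of $s_a$, which is handled as above. I would also double-check directly that the star's value matches the formula: $(n-1)(n-2)^a+((n-1)(n-2))^a$ should equal the right-hand side with $m=n-1$. As a sanity check, $2n(n-1)-4(n-1)-n(n-2)=(n-2)^2$, which equals $\Delta-\delta=(n-2)^2$, so $n_\Delta=1$ and the formula reproduces the star's value.
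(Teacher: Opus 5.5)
Your proof is correct and follows essentially the paper's route: combine the bound $KB^a(G)\leq n\delta^a+(2M_1-4m-n\delta)s_a$ from Theorem~\ref{Theorem1} (via Corollary~\ref{Cor1}(1)) with Zhou's inequality $M_1\leq n(n-1)$. You also spell out two things the paper leaves implicit: that $s_a>0$, which is needed to substitute the bound on $M_1$, and the direct check that the star attains equality in both inequalities.
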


\begin{proof}
Zhou \cite{zhou2006note} proved that the first Zagreb index of a triangle- and quadrangle-free graph satisfies $M_1\leq n(n-1)$. Combining this inequality with Theorem \ref{Theorem1} gives the result.
\end{proof}

The following theorem provides additional formulas for the general and general multiplicative Kulli-Basava indices.

\begin{theorem}\label{Theorem2}
Let $G$ be a graph with $n\geq 3$ vertices and $m$ edges. Let $n_i$ denote the number of vertices with edge neighborhood degree $i$, and let $\delta$ and $\Delta$ denote the minimum and maximum edge neighborhood degrees, respectively. Let $a\in\mathbb{R}\setminus\{0,1\}$. Then the following statements hold.
 \begin{enumerate}
 \item
\begin{align*}
KB^a(G)={}&n\delta^a+(2M_1-4m-n\delta)\bigl((\delta+1)^a-\delta^a\bigr)\\
&+\sum_{i=2}^{\Delta-\delta}n_{\delta+i}
\left[(\delta+i)^a-\delta^a-i\bigl((\delta+1)^a-\delta^a\bigr)\right].
\end{align*}
   
If $a<0$ or $a>1$, the coefficients $(\delta+i)^a-\delta^a-i((\delta+1)^a-\delta^a)$ are nonnegative for $2\leq i\leq\Delta-\delta$; if $0<a<1$, they are nonpositive.

\item If $a>1$, then the natural logarithm of the general multiplicative Kulli-Basava index is
\[
\begin{aligned}
\ln\left[\prod_{v\in V(G)}S_e(v)^a\right]
&=a\left[n\ln(\delta)+(2M_1-4m-n\delta)(\ln(\delta+1)-\ln\delta)\right.\\
&\qquad\left.+\sum_{i=2}^{\Delta-\delta}n_{\delta+i}\left(\ln(\delta+i)-\ln(\delta)-i(\ln(\delta+1)-\ln\delta)\right)\right].
\end{aligned}
\]
Moreover, the coefficients of $n_{\delta+i}$ are nonpositive for $2\leq i\leq\Delta-\delta$.
\end{enumerate} 
 
\end{theorem}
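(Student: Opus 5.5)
The plan mirrors the proof of Theorem \ref{Theorem1}, with $n_{\delta+1}$ eliminated instead of $n_\Delta$. I will use the two linear relations from part (iii) of Lemma 2.2 in \cite{basavanagoud2019kulli}, namely
\[
\sum_{i=\delta}^{\Delta} i\,n_i = 2M_1-4m \qquad\text{and}\qquad \sum_{i=\delta}^{\Delta} n_i = n.
\]
Multiplying the second by $\delta$ and subtracting gives $\sum_{i=\delta+1}^{\Delta}(i-\delta)n_i = 2M_1-4m-n\delta$. Solving for $n_{\delta+1}$ then yields
\[
n_{\delta+1}=2M_1-4m-n\delta-\sum_{i=2}^{\Delta-\delta} i\,n_{\delta+i}.
\]
Note that no division by $\Delta-\delta$ occurs, so the case $\delta=\Delta$ need not be excluded. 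When $\Delta=\delta$ or $\Delta=\delta+1$, the sums are empty or trivial and the formula still holds.

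Next, as in the proof of Theorem \ref{Theorem1}, I substitute $n_\delta=n-\sum_{i\ge\delta+1}n_i$ to get
\[
KB^a(G)=n\delta^a+\sum_{i=1}^{\Delta-\delta}n_{\delta+i}\bigl((\delta+i)^a-\delta^a\bigr).
\]
I then split off the $i=1$ term and insert the expression for $n_{\delta+1}$. The $i=1$ term becomes $(2M_1-4m-n\delta)((\delta+1)^a-\delta^a)$ minus $\sum_{i\ge2} i\,n_{\delta+i}((\delta+1)^a-\delta^a)$. Collecting coefficients of each $n_{\delta+i}$ for $i\ge2$ gives exactly the bracket in part (1).

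The sign claims follow directly from Lemma \ref{Lemma} part (2), applied with $p=\delta$, $q=\Delta$, and $\alpha=a$. This requires $\delta\ge1$, which holds since $G$ is connected with $n\ge3$, and hence every edge has degree at least $1$.

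For part (2), I do the same computation with $i^a$ replaced by $\ln i$, multiplied by $a$. Writing
\[
\ln\prod_{v\in V(G)} S_e(v)^a = a\sum_{i=\delta}^{\Delta} n_i\ln i,
\]
I substitute for $n_\delta$, then for $n_{\delta+1}$, and regroup. The nonpositivity of the coefficients is Lemma \ref{Lemma2} part (2).

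I expect no real obstacle, since the argument is bookkeeping. The only point needing care is the index shift that converts the sum over $i$ from $\delta+1$ to $\Delta$ into a sum over $i$ from $1$ to $\Delta-\delta$. The other is verifying that $\delta\ge1$, so that Lemma \ref{Lemma} applies with positive integers $p<q$ and the logarithm is defined. In the degenerate case $\Delta=\delta$, the sign statements are vacuous.
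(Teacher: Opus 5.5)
Your proposal is correct and follows essentially the same route as the paper. You use $\sum_i i\,n_i=2M_1-4m$ and $\sum_i n_i=n$ to eliminate $n_\delta$ and $n_{\delta+1}$, substitute into $\sum_i i^a n_i$ (respectively $a\sum_i n_i\ln i$), and read off the coefficient signs from part (2) of Lemma \ref{Lemma} and part (2) of Lemma \ref{Lemma2}. Your checks that $\delta\geq 1$ for a connected graph with $n\geq 3$ and that the cases $\Delta-\delta\leq 1$ are harmless are correct details the paper leaves implicit.
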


\begin{proof}

By part (iii) of Lemma 2.2 in \cite{basavanagoud2019kulli}, $\sum_{i=\delta}^\Delta i n_i=2M_1-4m$. We also have $n=\sum_{i=\delta}^\Delta n_i$. Hence $n_\delta+n_{\delta+1}=n-\sum_{i=\delta+2}^\Delta n_i$ and $\delta n_\delta+(\delta+1)n_{\delta+1}=2M_1-4m-\sum_{i=\delta+2}^\Delta i n_i$. Solving for $n_\delta$ and $n_{\delta+1}$, substituting the results into $KB^a(G)=\sum_{i=\delta}^\Delta i^a n_i$, and applying Lemma \ref{Lemma} proves part (1). For part (2), write the natural logarithm of the general multiplicative Kulli-Basava index in terms of $n_i$ and substitute $n_\delta=n-\sum_{i=\delta+1}^\Delta n_i$. Then
\[
\ln\left[\prod_{v\in V(G)}S_e(v)^a\right]=a\sum_{i=\delta}^\Delta\ln(i)n_i=a\left[n\ln(\delta)+\sum_{i=\delta+1}^\Delta n_i(\ln(i)-\ln(\delta))\right].
\]
Substituting $n_{\delta+1}$ into the sum and applying part (2) of Lemma \ref{Lemma2} completes the proof.

\end{proof}

The following corollary gives sharp bounds for the general and general multiplicative Kulli-Basava indices, depending on the value of $a$.

\begin{cor}
Let $G$ be a graph with $n\geq 3$ vertices and $m$ edges. Let $\delta$ denote the minimum edge neighborhood degree, and let $a\in\mathbb{R}\setminus\{0,1\}$. Then the following statements hold.

\begin{enumerate}
       
\item If $a < 0$ or $ a > 1$, then the lower bound for the general Kulli-Basava index is $KB^a(G) \geq n\delta^a +  (2M_1 - 4m - n\delta) ((\delta + 1)^a - \delta^a)$.

\item If $0 < a < 1$, then the upper bound for the general Kulli-Basava index is $KB^a(G)  \leq n\delta^a +  (2M_1 - 4m - n\delta) ((\delta + 1)^a - \delta^a)$.

\item If $a>1$, then
\[
\ln\left[\prod_{v\in V(G)}S_e(v)^a\right]\leq a\left[n\ln(\delta)+(2M_1-4m-n\delta)(\ln(\delta+1)-\ln\delta)\right].
\]

In all three parts, equality holds for every regular graph.
    
\end{enumerate}
       
\end{cor}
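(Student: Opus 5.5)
The plan is to read all three parts off the exact identities in Theorem \ref{Theorem2}. The bounds should follow by discarding the correction sums, whose coefficients Theorem \ref{Theorem2} (via Lemma \ref{Lemma} and Lemma \ref{Lemma2}) already shows have a fixed sign. Equality for regular graphs should then come from checking that these sums vanish in that case.

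For part (1), fix $a<0$ or $a>1$. By part (1) of Theorem \ref{Theorem2},
\[
KB^a(G)=n\delta^a+(2M_1-4m-n\delta)\bigl((\delta+1)^a-\delta^a\bigr)+\sum_{i=2}^{\Delta-\delta}n_{\delta+i}c_i,
\]
where $c_i=(\delta+i)^a-\delta^a-i\bigl((\delta+1)^a-\delta^a\bigr)\geq 0$ for $2\leq i\leq\Delta-\delta$. Since each $n_{\delta+i}\geq 0$, the sum is nonnegative, and dropping it gives the lower bound.

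Part (2) is identical, except that for $0<a<1$ every $c_i\leq 0$, so the inequality reverses.

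Part (3) uses part (2) of Theorem \ref{Theorem2}. There the coefficients $\ln(\delta+i)-\ln\delta-i(\ln(\delta+1)-\ln\delta)$ are nonpositive and $a>1>0$, so deleting the sum yields the stated upper bound on the logarithm.

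For the equality statement, let $G$ be $k$-regular. Every edge then has degree $2k-2$, so every vertex has $S_e(v)=k(2k-2)$. Hence $\delta=\Delta$, and the correction sums over $2\leq i\leq\Delta-\delta$ are empty. Moreover $M_1=nk^2$ and $m=nk/2$, so
\[
2M_1-4m-n\delta=2nk^2-2nk-nk(2k-2)=0.
\]
Both sides of each inequality therefore collapse to $n\delta^a$, respectively $an\ln\delta$, and the two sides agree with the true values $\sum_v S_e(v)^a=n\delta^a$ and $a\sum_v\ln S_e(v)=an\ln\delta$.

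The main obstacle is to confirm that Theorem \ref{Theorem2} is valid as used, and in particular that it applies when $\delta=\Delta$, which is exactly the regular case. Theorem \ref{Theorem2}, unlike Theorem \ref{Theorem1}, does not assume $\delta\neq\Delta$, because its elimination solves for $n_\delta$ and $n_{\delta+1}$ from two equations whose determinant is $1$. I would nevertheless recheck that derivation, substituting $n_\delta$ and $n_{\delta+1}$ into $\sum i^a n_i$ and collecting terms.

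That check also handles a degenerate case. When $\Delta=\delta$ the index $n_{\delta+1}$ is not a genuine count, but setting $n_{\delta+1}=0$ is consistent with the two linear equations. This is precisely what the identity $2M_1-4m-n\delta=0$ expresses, so the formula, and hence the equality claim for regular graphs, remains valid.
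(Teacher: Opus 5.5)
Your proposal is correct and follows the paper's route: the paper's proof is just that the bounds follow from Theorem \ref{Theorem2}, i.e.\ you drop the correction sums, whose coefficients have a fixed sign by part (2) of Lemma \ref{Lemma} and part (2) of Lemma \ref{Lemma2}. Your explicit check of the equality case fills in what the paper leaves implicit: for a $k$-regular graph $S_e(v)=k(2k-2)$, so $\delta=\Delta$ and $2M_1-4m-n\delta=0$. That check also needs $\delta\geq 1$, which holds because $G$ is connected with $n\geq 3$.
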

\begin{proof}
The results follow from Theorem \ref{Theorem2}.
\end{proof}

The following corollary gives a sharp lower bound for graphs with a prescribed clique number.

\begin{cor}
Let $G$ be a graph with $n\geq 3$ vertices, $m$ edges, and clique number $w$. Let $\delta$ denote the minimum edge neighborhood degree, and let $a>1$. Then
\[
KB^a(G)\geq n\delta^a+\left(4m(2n-1)+2n^3\left(\frac{1}{w^2}-1\right)-n\delta\right)((\delta+1)^a-\delta^a),
\]
with equality if $G$ is regular of degree $k$ and $w=\frac{n}{n-k}$.
\end{cor}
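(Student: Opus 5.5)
The plan is to combine part (1) of the preceding corollary, which follows from Theorem \ref{Theorem2}, with Filipovski's lower bound on $M_1$ in terms of the clique number.

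\textbf{Step 1: the lower bound in terms of $M_1$.} For $a>1$, part (1) of the preceding corollary gives
\[
KB^a(G)\geq n\delta^a+(2M_1-4m-n\delta)\bigl((\delta+1)^a-\delta^a\bigr).
\]
Since $a>1$, the function $x^a$ is increasing on $(0,\infty)$, so the factor $(\delta+1)^a-\delta^a$ is positive. Hence the right-hand side is increasing in $M_1$, and any lower bound on $M_1$ may be substituted into it.

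\textbf{Step 2: substituting Filipovski's bound.} That bound gives $M_1\geq 4mn+\frac{n^3}{w^2}-n^3$, so
\[
2M_1-4m-n\delta\geq 8mn-4m+2n^3\Bigl(\frac{1}{w^2}-1\Bigr)-n\delta
=4m(2n-1)+2n^3\Bigl(\frac{1}{w^2}-1\Bigr)-n\delta .
\]
Multiplying by the positive factor $(\delta+1)^a-\delta^a$ and adding $n\delta^a$ yields the stated inequality. The only algebra needed is the identity $8mn-4m=4m(2n-1)$.

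\textbf{Step 3: equality.} Suppose $G$ is regular of degree $k$ and $w=\frac{n}{n-k}$.
\begin{itemize}
\item Filipovski's bound is then an equality.
\item Every vertex has the same edge neighborhood degree $S_e(v)=k(2k-2)$. Thus $n_{\delta+i}=0$ for all $i\geq 1$, the sum in Theorem \ref{Theorem2}(1) vanishes, and the first inequality is an equality.
\end{itemize}
Both inequalities in the chain are tight, so equality holds.

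\textbf{Main obstacle.} There is no real technical difficulty. The points to check are the positivity of the multiplier, which is what licenses the monotone substitution and uses $a>1$, and that the two equality conditions are compatible, which holds because regularity gives both. The paper defines $M_1$ with the convention $\sum_v S_e(v)=2M_1-4m$, and I will keep that convention throughout.
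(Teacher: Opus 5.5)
Your proposal is correct and takes the same route as the paper, which obtains the result by combining the lower bound from Theorem~\ref{Theorem2} (part (1) of the preceding corollary) with Filipovski's clique-number bound on $M_1$. You also check that $(\delta+1)^a-\delta^a>0$, which justifies substituting the bound on $M_1$, and verify that regularity makes both inequalities tight; these are details the paper leaves implicit.
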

\begin{proof}
The result follows from Theorem \ref{Theorem2} and Theorem 2.8 of Filipovski \cite{filipovski2021new}.
\end{proof}

The following theorem gives further bounds for the general Kulli-Basava index.

\begin{theorem}
Let $G$ be a graph with $n\geq 3$ vertices and $m$ edges, and let $a\in\mathbb{R}\setminus\{0,1\}$.

\begin{enumerate}
       
\item If $a < 0$ or $a > 1$, then the lower bound for the general Kulli-Basava index is $KB^a(G) \geq n^{1 - a} (2M_1 - 4m)^a $.

\item If $0<a<1$, then the upper bound  is $KB^a(G)  \leq n^{1 - a}(2M_1 - 4m)^a$.

\end{enumerate}
In both parts, equality holds if $S_e(v)$ is the same for every vertex $v$.
\end{theorem}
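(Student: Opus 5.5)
The plan is to apply Jensen's inequality (Theorem \ref{Jensen}) to the function $f(x)=x^a$ on $(0,\infty)$, taking the $n$ numbers $x_v=S_e(v)$, $v\in V(G)$. The first step is to identify the average of these numbers. By part (iii) of Lemma 2.2 in \cite{basavanagoud2019kulli}, used already in the proof of Theorem \ref{Theorem1}, we have $\sum_{v\in V(G)}S_e(v)=\sum_{i=\delta}^{\Delta} i\,n_i=2M_1-4m$. The arithmetic mean is therefore $\frac{2M_1-4m}{n}$.

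Next I determine the convexity of $f$. We have $f''(x)=a(a-1)x^{a-2}$, which is positive for $x>0$ exactly when $a<0$ or $a>1$, and negative when $0<a<1$. We also need every $S_e(v)>0$ so that $f$ is defined and differentiable there. Since $G$ is connected with $n\geq 3$, every vertex lies on an edge of degree at least $1$: an edge $uv$ has degree $0$ only if $G=K_2$. Hence $S_e(v)\geq 1$ for every $v$.

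Now take $a<0$ or $a>1$. Jensen's inequality gives
\[
\left(\frac{2M_1-4m}{n}\right)^a\leq \frac{1}{n}\sum_{v\in V(G)}S_e(v)^a=\frac{KB^a(G)}{n}.
\]
Multiplying by $n$ yields $KB^a(G)\geq n\cdot n^{-a}(2M_1-4m)^a=n^{1-a}(2M_1-4m)^a$, which is part (1). For $0<a<1$, the function $-f$ is strictly convex, so the inequality reverses and gives part (2).

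For the equality statement, Jensen's inequality is an equality precisely when all the $x_v$ coincide. So equality holds (indeed, if and only if) $S_e(v)$ is the same for every vertex $v$. One example is any regular graph. There is no real obstacle in this argument; the only points to check are the positivity of $S_e(v)$ needed to apply Jensen's inequality for negative or non-integer $a$, and the identity $\sum_v S_e(v)=2M_1-4m$. That identity can also be verified directly: $\sum_v S_e(v)=\sum_{uv\in E}2(d(u)+d(v)-2)=2M_1-4m$.
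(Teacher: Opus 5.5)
Your proposal is correct and follows essentially the same route as the paper: Jensen's inequality (Theorem \ref{Jensen}) applied to $f(x)=x^a$ on the values $S_e(v)$ with mean $\frac{2M_1-4m}{n}$, and reversed via $-f$ when $0<a<1$. You also supply details the paper leaves implicit, namely the positivity $S_e(v)\geq 1$ for connected $G$ with $n\geq 3$ and the direct check that $\sum_v S_e(v)=2M_1-4m$, and you correctly note that the equality condition is in fact an if-and-only-if.
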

\begin{proof}
For part (1), assume that $a<0$ or $a>1$ and let $f(x)=x^a$ for $x>0$. Since $f''(x)>0$, the function is strictly convex. Applying Jensen's inequality (Theorem \ref{Jensen}) to the edge neighborhood degrees and simplifying proves the bound.

For part (2), if $0<a<1$, then $-f(x)=-x^a$ is strictly convex. Applying Jensen's inequality yields the upper bound. Equality holds in both cases when $S_e(v)$ is constant over all vertices $v\in V(G)$.
\end{proof}

The following corollary gives a sharp upper bound for the Kulli-Basava index of triangle-free graphs. The bound depends on the numbers of vertices and edges and on the minimum and maximum vertex degrees.

\begin{cor}
Let $G$ be a triangle-free graph with $n\geq 3$ vertices and $m$ edges. Let $d$ and $D$ denote the minimum and maximum vertex degrees, respectively, where $d\neq D$, and let $a>1$. Then
\[
KB^a(G)\leq(n-2)^a\left[nd^a+(2m-nd)\left(\frac{D^a-d^a}{D-d}\right)\right],
\]
with equality for a star graph.
\end{cor}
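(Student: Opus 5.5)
The plan is to bound each edge degree using triangle-freeness, which reduces $KB^a(G)$ to a sum of powers of vertex degrees. That sum is then bounded by the same elimination argument as in Theorem \ref{Theorem1}, applied to vertex degrees instead of edge neighborhood degrees.

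\emph{Step 1 (edge degrees).} Let $uv\in E(G)$. Because $G$ is triangle-free, $N(u)\cap N(v)=\emptyset$. Since $v\in N(u)$ and $u\in N(v)$, we get $d(u)+d(v)=|N(u)\cup N(v)|\le n$. Hence $d_G(uv)=d(u)+d(v)-2\le n-2$. Summing over the $d(v)$ edges at $v$ gives
\[
S_e(v)\le (n-2)\,d(v).
\]
Both sides are nonnegative and $x\mapsto x^a$ is increasing for $a>1$. Therefore $S_e(v)^a\le (n-2)^a d(v)^a$, and so $KB^a(G)\le (n-2)^a\sum_{v}d(v)^a$.

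\emph{Step 2 (sum of powers of degrees).} Let $m_i$ be the number of vertices of degree $i$, for $d\le i\le D$. Then $\sum_i m_i=n$ and $\sum_i i\,m_i=2m$. I will repeat the computation in the proof of Theorem \ref{Theorem1} with $(\delta,\Delta,2M_1-4m)$ replaced by $(d,D,2m)$: substitute $m_d=n-\sum_{i>d}m_i$ and solve for $m_D$. This yields
\[
\sum_v d(v)^a = nd^a+(2m-nd)\frac{D^a-d^a}{D-d}+\sum_{i=1}^{D-d-1}m_{d+i}\Bigl[(d+i)^a-d^a-i\,\tfrac{D^a-d^a}{D-d}\Bigr].
\]
By part (1) of Lemma \ref{Lemma}, with $p=d$, $q=D$ and $\alpha=a>1$, every bracket is nonpositive. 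Dropping the last sum and combining with Step 1 gives the stated inequality.

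\emph{Step 3 (equality for the star).} For $K_{1,n-1}$ every edge has degree $(n-1)+1-2=n-2$, so equality holds in Step 1 at every vertex. Only the degrees $d=1$ and $D=n-1$ occur, so every $m_{d+i}$ with $1\le i\le D-d-1$ is zero and equality holds in Step 2. The hypothesis $d\ne D$ requires $n\ge 3$, which is assumed. The key point of the argument is Step 1, namely the inequality $d(u)+d(v)\le n$ for adjacent vertices of a triangle-free graph. Once that is in place, Step 2 is a direct transcription of the earlier theorem, so I do not expect a real obstacle.
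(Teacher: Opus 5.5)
Your proposal is correct and follows the paper's proof. Both use triangle-freeness to get $d(u)+d(v)\le n$, hence $S_e(v)\le (n-2)d(v)$ and $KB^a(G)\le (n-2)^a\sum_v d(v)^a$. The one difference is that the paper cites Vaidya and Chang for $\sum_v d(v)^a\le nd^a+(2m-nd)\frac{D^a-d^a}{D-d}$, whereas you rederive it with the elimination argument of Theorem \ref{Theorem1} and part (1) of Lemma \ref{Lemma}, and you also check equality for the star explicitly.
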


\begin{proof}
Because $G$ is triangle-free, every edge $uv\in E(G)$ satisfies $d(u)+d(v)\leq n$. Therefore, the edge neighborhood degree of any vertex $v$ satisfies $S_e(v)\leq(n-2)d(v)$. Hence $KB^a(G)\leq(n-2)^aM_1^a(G)$. Vaidya and Chang \cite{vaidya2024sharp} proved that $M_1^a\leq nd^a+(2m-nd)\left(\frac{D^a-d^a}{D-d}\right)$, and the result follows.
\end{proof}

In Theorem 2.2, Hu, Li, Shi, Xu, and Gutman \cite{hu2005molecular} established formulas for the maximum and minimum zeroth-order general Randi\'{c} indices of $(n,m)$ molecular graphs when $2m-n$ is congruent to $0$, $1$, or $2$ modulo $3$ and the maximum vertex degree is $4$. The following theorem extends this approach and gives sharp bounds for the general Kulli-Basava index when $2M_1-4m-n\delta$ is congruent to an arbitrary integer $r$ modulo $\Delta-\delta$, where $0\leq r<\Delta-\delta$, and the maximum edge neighborhood degree is at least $3$.

\begin{theorem}\label{Theorem3}
Let $G$ be a graph with $n\geq 3$ vertices and $m$ edges. Let $n_i$ denote the number of vertices with edge neighborhood degree $i$, and let $\delta$ and $\Delta$ denote the minimum and maximum edge neighborhood degrees, respectively. Assume that $\Delta-\delta\geq2$ and $2M_1-4m-n\delta=q(\Delta-\delta)+r$, where $q$ is a positive integer and $r$ is an integer such that $0\leq r\leq\Delta-\delta-1$. Let $a\in\mathbb{R}\setminus\{0,1\}$ and set $s_a=\frac{\Delta^a-\delta^a}{\Delta-\delta}$. Then the following statements hold.

 \begin{enumerate}
\item If $r=0$ and $n_\Delta=q$, then $G$ is a bi-edge-neighborhood-degree graph whose vertices have edge neighborhood degrees $\delta$ and $\Delta$.

  \item If $r \geq 1$ and $n_{\Delta} = q$, then $n_i = 0$ for $\delta + r + 1 \leq i \leq \Delta  - 1$ and $n_{\delta + r} \leq 1$.

\item If $r\geq1$, either $a<0$ or $a>1$, and $n_{\delta+r}\neq0$, then
\[
KB^a(G)\leq n\delta^a+(2M_1-4m-n\delta)s_a+(\delta+r)^a-\delta^a-rs_a.
\]

\item If $r\geq1$, $0<a<1$, and $n_{\delta+r}\neq0$, then
\[
KB^a(G)\geq n\delta^a+(2M_1-4m-n\delta)s_a+(\delta+r)^a-\delta^a-rs_a.
\]

\item If $r\geq1$, $a>1$, and $n_{\delta+r}\neq0$, then
\[
\begin{aligned}
\ln\left[\prod_{v\in V(G)}S_e(v)^a\right]\geq
&a\left[n\ln(\delta)+\frac{(2M_1-4m-n\delta)(\ln\Delta-\ln\delta)}{\Delta-\delta}\right.\\
&\qquad\left.+\ln(\delta+r)-\ln(\delta)-r\frac{\ln\Delta-\ln\delta}{\Delta-\delta}\right].
\end{aligned}
\]
 \end{enumerate}
In parts (3)--(5), equality holds if $G$ has $q$ vertices of edge neighborhood degree $\Delta$, one vertex of edge neighborhood degree $\delta+r$, and $n-q-1$ vertices of edge neighborhood degree $\delta$.
 
\end{theorem}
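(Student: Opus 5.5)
Everything rests on the identity obtained in the proof of Theorem \ref{Theorem1}: with $N=2M_1-4m-n\delta=q(\Delta-\delta)+r$,
\[
\sum_{i=1}^{\Delta-\delta-1} i\,n_{\delta+i} + (\Delta-\delta)n_\Delta = N,
\]
which follows from $\sum_i i n_i = 2M_1-4m$ and $\sum_i n_i = n$.

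For parts (1) and (2), substitute $n_\Delta=q$. This gives $\sum_{i=1}^{\Delta-\delta-1} i\,n_{\delta+i}=r$. If $r=0$, every term is a nonnegative integer multiple of a positive index and the sum vanishes, so $n_{\delta+i}=0$ for $1\le i\le \Delta-\delta-1$. Hence only the degrees $\delta$ and $\Delta$ occur, which is part (1). If $r\ge1$, then any $i\ge r+1$ with $n_{\delta+i}\ge1$ would push the sum above $r$, so $n_i=0$ for $\delta+r+1\le i\le\Delta-1$. Likewise $n_{\delta+r}\ge2$ would give a sum of at least $2r>r$, so $n_{\delta+r}\le1$. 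This is part (2).

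For parts (3)--(5), start from the exact formula in Theorem \ref{Theorem1}(1):
\[
KB^a(G)=n\delta^a+Ns_a+\sum_{i=1}^{\Delta-\delta-1}n_{\delta+i}c_i, \qquad c_i=(\delta+i)^a-\delta^a-is_a.
\]
When $a<0$ or $a>1$, Lemma \ref{Lemma}(1) gives $c_i\le0$ for every $i$. Since $n_{\delta+r}\ge1$, every summand is nonpositive, so
\[
\sum_i n_{\delta+i}c_i\le n_{\delta+r}c_r\le c_r,
\]
where the last step uses $c_r\le 0$ and $n_{\delta+r}\ge 1$. This proves part (3). For $0<a<1$ all the $c_i$ are nonnegative and the same argument runs in reverse, giving part (4). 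Part (5) is identical, using Theorem \ref{Theorem1}(2) with the logarithmic coefficients, which are nonnegative by Lemma \ref{Lemma2}(1).

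For equality, take the stated configuration: $n_\Delta=q$, $n_{\delta+r}=1$, and all remaining vertices of degree $\delta$. It satisfies the counting identity, since $q(\Delta-\delta)+r=N$, and in the formula only the term $i=r$ survives, with $n_{\delta+r}=1$. So the bound is attained exactly.

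The main obstacle is conceptual rather than computational. The step discarding the other summands is trivially valid because every $c_i$ has the same sign; the hypothesis $n_{\delta+r}\ne0$ is what makes the remaining term at least $c_r$. I will check that no hidden assumption such as $n_\Delta=q$ is needed for parts (3)--(5), and indeed the sign argument does not use it. The only subtlety is to state equality as occurring for graphs with that degree distribution, whenever such graphs exist.
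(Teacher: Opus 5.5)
Your proof is correct and follows the paper's argument. You use the counting identity $\sum_{i=\delta+1}^{\Delta}(i-\delta)n_i=q(\Delta-\delta)+r$ with $n_\Delta=q$ for parts (1)--(2), and for parts (3)--(5) you use the exact formulas of Theorem \ref{Theorem1}, whose coefficients all have one sign, keeping only the $i=r$ term; you also spell out the step $n_{\delta+r}c_r\le c_r$ (and its reversed analogues), which the paper compresses into ``since $n_{\delta+r}\neq 0$''.
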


\begin{proof}
By part (iii) of Lemma 2.2 in \cite{basavanagoud2019kulli}, $\sum_{i=\delta}^\Delta i n_i=2M_1-4m$. We also have $n=\sum_{i=\delta}^\Delta n_i$. Solving these equations gives
\[
\sum_{i=\delta+1}^\Delta(i-\delta)n_i=2M_1-4m-n\delta=q(\Delta-\delta)+r.
\]
If $r=0$ and $n_\Delta=q$, then $n_i=0$ for $\delta+1\leq i\leq\Delta-1$, which proves part (1). If $r\geq1$ and $n_\Delta=q$, then $n_i=0$ for $\delta+r+1\leq i\leq\Delta-1$ and $n_{\delta+r}\leq1$, which proves part (2). For part (3), let $r\geq1$ and $n_{\delta+r}\neq0$. By part (1) of Theorem \ref{Theorem1},
$KB^a(G)=n\delta^a +  (2M_1 - 4m - n\delta)s_a +\sum_{i =1}^{\Delta - \delta - 1}n_{\delta + i} \left[(\delta + i)^a - \delta^a  - is_a\right].$

If $a<0$ or $a>1$, then $(\delta+i)^a-\delta^a-is_a\leq0$ for $1\leq i\leq\Delta-\delta-1$. Therefore,

$KB^a(G) \leq n\delta^a +  (2M_1 - 4m -n\delta)s_a + n_{\delta + r} \left[(\delta + r)^a - \delta^a  - (r)s_a\right].$

Since $n_{\delta+r}\neq0$, part (3) follows. Similarly, part (4) follows because $(\delta+i)^a-\delta^a-is_a\geq0$ for $0<a<1$ and $1\leq i\leq\Delta-\delta-1$. Part (5) follows from part (2) of Theorem \ref{Theorem1}. The equality condition in parts (3)--(5) also follows from Theorem \ref{Theorem1}.

\end{proof}

\begin{example}
For the graph in Figure \ref{fig:2}, $n=5$, $m=6$, $\delta=5$, $\Delta=10$, and $M_1=30$. Thus, $2M_1-4m-n\delta=11=2(5)+1$. The graph attains the bounds in parts (3)--(5) of Theorem \ref{Theorem3}.

\begin{figure}[h]
\centering
\begin{minipage}{.4\textwidth}
  \centering
  \begin{tikzpicture}[scale=0.3]
\draw[ultra thick] (-3,3) -- (3,3) -- (3,-3)--(-3,-3) -- (-3,3);
\draw[ultra thick] (3,3)--(0,6.5);
\draw[ultra thick] (-3,3)--(0,6.5);
\filldraw[black] (3,3) circle (10pt);
\filldraw[black] (-3,3) circle (10pt);
\filldraw[black] (3,-3) circle (10pt);
\filldraw[black] (-3,-3) circle (10pt);
\filldraw[black] (0,6.5) circle (10pt);
\end{tikzpicture}
\caption{A graph attaining the bounds in Theorem \ref{Theorem3}.}
  \label{fig:2}
\end{minipage}%

\end{figure}

\end{example}

\section{Conclusion}

In this paper, we established formulas for the general Kulli-Basava index of graphs. We also derived upper and lower bounds, characterized graphs that attain them, and obtained sharp bounds for several special graph classes.

Because more than 100 topological indices have been introduced, many open problems remain concerning formulas and bounds for these indices. They play a crucial role in modeling the physicochemical and biological properties of chemical compounds. Such mathematical modeling can reduce both cost and time in applications including drug discovery and chemical hazard assessment.

\bibliographystyle{vancouver}
\bibliography{VancouverExamples.bib}

\end{document}